\chardef\bslash=`\\ 
\def\verbatim{\interlinepenalty\@M \@verbatim
  \leftskip\@totalleftmargin\advance\leftskip2pc
  \frenchspacing\@vobeyspaces \@xverbatim}
\newtheorem{thm}{Theorem}[section]
\newtheorem{cor}[thm]{Corollary}
\newtheorem{lem}[thm]{Lemma}
\newtheorem{ex}[thm]{Example}
\newtheorem{que}[thm]{Question}
\begin{document}


\title
{Factorization of Bijections onto Ordered Spaces}
\author{Raushan Buzyakova}
\email{Raushan\_Buzyakova@yahoo.com}
\author{Alex Chigogidze}
\keywords{ind dimension, linearly ordered space, generalized ordered space,  bijection, map factorization}
\subjclass{54F05, 06F30, 54F45, 54C99}


\begin{abstract}{
We identify a class of subspaces of  ordered spaces $\mathcal L$ for which the following statement holds:
If $f:X\to L\in \mathcal L$ is a continuous bijections of a zero-dimensional space $X$, then $f$ can be re-routed via a zero-dimensional subspace of an ordered space that has weight not exceeding that of $L$.

}
\end{abstract}

\maketitle
\markboth{R. Buzyakova and A. Chigogidze}{Factorization of Bijections onto Ordered Spaces}
{ }

\section{Introduction}\label{S:intro}

\par\bigskip
The paper studies  factorization of continuous bijections onto subspaces of ordered spaces. In \cite{Mar}, Mardesic proved, in particular, that any continuous map of an $n$-dimensional compactum into a metric compact space can be re-routed via a metric compactum of dimension at most $n$. Deep research has been inspired by this result. We would like to mention a theorem of Pasynkov \cite{Pas} that any continuous map of  a Tychonoff Ind-$n$ space into a metric space $M$ admits a factorization with the middle space  of Ind-dimension at most $n$ and weight at most $w(M)$. This result implies, in particular, that if a space of Ind-dimension $n$  admits a continuous injection into a metric space, then it admits a continuous injection into a metric space of Ind-dimension at most $n$.  The paper is devoted to the following general problem: 
\par\medskip\noindent
{\it Problem. Let a space $X$ of dimension $n$ admit a continuous bijection onto a space with property $P$. Does $X$ admit a continuous bijection onto a space of dimension at most $n$ and  with property $P$?}
\par\medskip\noindent
In this paper we are interested in inductive "ind"  dimension.
We first observe that for any $n>2$ there exists a zero-dimensional space $X$ that admits a continuous injection into $\mathbb R^{n}$ but not into $\mathbb R^{n-1}$. We then observe that, if a zero-dimensional space continuously injects into  $\mathbb R$, then it admits a continuous injections into the Cantor Set. The latter observation led the authors to the main result of this paper. To state the result, let $\mathcal L$ be the class of all subspaces of ordered spaces that have a $\sigma$-disjoint $\pi$-base. Note that a subspace  $L$ of an ordered space is in $\mathcal L$ if and only if $L$ has a dense subset $\cup_n X_n$, where each $X_n$ is a discrete (in itself) subspace. In our main result, Theorem \ref{thm:zerodim}, we prove that {\it if  $f:X\to L\in \mathcal L$ is a continuous bijection of a zero-dimensional space $X$, then $f$ can be re-routed via a zero-dimensional subspace $M$ of an ordered space of weight at most that of $L$.}  One may wonder if the class $\mathcal L$ can be replaced by the class of all ordered spaces and their subspaces. 
The authors do not have an example to justify the restriction on the class $\mathcal L$ but believe that there is a good chance for an example.

In notation and terminology, we will follow \cite{Eng}.  Spaces of ind-dimension $0$ are called {\it zero-dimensional}. A linearly ordered space (abbreviated as LOTS)  is one with the topology induced by some order. A subspace of a LOTS is a generalized ordered space (abbreviated as GO).  It is a known and very useful fact that 
 a Hausdorff space $L$ is a GO-space, if  its topology can be generated by a collection of convex sets with respect to some order on $L$ (see, for example, \cite{BL}). A subset $A$ of an ordered set $\langle S,\prec \rangle $ is {\it $\prec$-convex} if it is convex with respect to $\prec$. A collection $\mathcal U$ of non-empty open sets of a space $X$ is {\it a $\pi$-base} of $X$ if for any non-empty open set $O$ in $X$ there exists $U\in \mathcal U$ such that $U\subset O$. All spaces are assumed Tychonoff. For the purpose of readability we will occasionally resort to an informal argument.

\par\bigskip
\section{Motivation}\label{S:factorization}

\par\bigskip
Ample research has been done to describe spaces that admit continuous injections into metric spaces. 
Assume that a space $X$ has a certain dimension and admits a continuous injection into  a metric space. It is natural to wonder if $X$ admits a continuous injection into a metric space of the same dimension. As mentioned in the introduction, if dimension is in the sense of Ind or dim, then the affirmative answer is a simple corollary of the mentioned Pasynkov factorization theorem. In case of  ind-dimension,  additional analysis may be required. We start with examples. 

\par\bigskip\noindent
\begin{ex}\label{ex:example1} 
For each $N>1$
there exists a separable space $X$ of $ind (X)=0$ that admits a continuous injection into  $\mathbb R^N$ but not into $\mathbb R^{N-1}$.
\end{ex}
\par\smallskip\noindent
{\it Construction.} 
Put $D = \{\langle q_1,...,q_N\rangle: q_i\in \mathbb Q\}$ and $P=\mathbb R^N\setminus D$.
The underlying set for our space is $\mathbb R^N=P\cup D$. We will define a new topology $\mathcal T$ on $\mathbb R^N$ so that $X=\langle\mathbb R^N, \mathcal T\rangle$ has desired properties.  When $\mathbb R^N$ is used with the Euclidean topology, we will refer to it by $\mathbb R^N$. In our new topology, points of $D$ are declared isolated.
Base neighborhoods at points of $P$ will be defined in two stages. 

\par\bigskip\noindent
\underline {\it Stage 1.}
For each map $f:D\to \mathbb R^{N-1}$, put 
$$P_f = \{x\in P: f\ cannot\  be\ continuously\ extended\ to \ D\cup\{x\}\},$$
where continuity at $x\in P$ is considered with respect to the Euclidean topology and $D$ is regarded as discrete.  Next let $F$ be the set of  all functions $f$ from $D$ to $\mathbb R^{N-1}$ such that $|P_f| = 2^\omega$.
For each $f\in F$, fix $x_f\in P_f$ so that $x_f\not = x_g$ for distinct $f,g\in F$. This can be done since $|F|=|P_f|=2^\omega$ for each $f\in F$. Let us define a base neighborhood at $x_f$, for a fixed $f\in F$.
Recall that $f$  cannot be continuously extended to $x_f$ with respect to the Euclidean topology at $x_f$. Therefore, there exists a sequence 
$\langle x_{f,n}\rangle_n$ of elements of $D$ that converges to $x_f$ in $\mathbb R^N$ such that $\langle f(x_{f,n})\rangle_n$ is not converging in $\mathbb R^{N-1}$. Sets in form $\{x_f\}\cup \{x_{f,i}: i > n\}$ will be base neighborhoods at $x_f$ in our new topology. This completes Stage 1.

\par\bigskip\noindent
\underline{\it Stage 2.}
At the first stage we may not have defined base neighborhoods at all points of $P$. If $x\in P$ is such a point, fix an arbitrary sequence $\langle x_{n}\rangle_n$ of elements of $D$ that converges to $x$ in $\mathbb R^N$. A base neighborhood at $x$ is in form $\{x\}\cup \{x_{i}: i > n\}$. This completes Stage 2 and definition of base neighborhoods at all points of $X$.

\par\bigskip
The topology $\mathcal T$ is generated  by the identified base neighborhoods. The space $X=\langle \mathbb R^N,\mathcal T \rangle$  has ind-dimension $0$, which follows from the definition of base neighborhoods. The Euclidean topology of $\mathbb R^N$ is a subtopology of $X$. Hence, $X$ admits a continuous bijection onto $\mathbb R^N$. Let us show that $X$ does not admit a continuous injection into $\mathbb R^{N-1}$. Fix any continuous function $h: X\to \mathbb R^{N-1}$. Assume that $f=h|_D$ is one-to-one. If $P_f$  had the cardinality of continuum, then 
$f$ would  have been discontinuous at $x_f$ due to Stage 1. Therefore, $P_f$ has a smaller cardinality. Then there exists  an irrational number $r^*$ such that $L=\{\langle r^*,r_2,...,r_N\rangle :0\leq r_i\leq 1\}\subset P\setminus (P_f\cup D)$. By the definition of $P_f$, for every 
$x\in P\setminus P_f$,  the function $h|_{D\cup \{x\}}$ is continuous at $x$ with respect to the Euclidean topology. This fact and density of $D$ in both $X$ and $\mathbb R^N$  imply the following claim. 
\par\medskip\noindent
{\it Claim. $h|_L$ is continuous on $L$ with respect to the Euclidean topology of $L$.}
\par\medskip\noindent
The set $L$ with the Euclidean topology is homeomorphic to a closed disc of $\mathbb R^{N-1}$.
Since  $h|_L$ is one-to-one on $L$ and $L$ is compact with respect to the Euclidean topology,  by Claim,  $h(L)$ has non-empty interior in $\mathbb R^{N-1}$.
Since $D$ is dense in $X$, we conclude that $h(d) \in h(L)$ for some $d\in D$.  Since $d\not\in L$, we conclude that  $h$ is not one-to-one on $X$.
\begin{flushright}$\square$\end{flushright}

\par\bigskip\noindent
Example \ref{ex:example1} may lead to an impression that with additional efforts one may construct a zero-dimensional space
which is continuously injectable into $\mathbb R$ but not into the Cantor Set.
However, it is not the case as observed next.  
\par\bigskip\noindent
\par\bigskip\noindent
\begin{thm}\label{thm:toR}
Let $f$ be a continuous injection of a zero-dimensional space $X$ into $\mathbb R$. Then $X$ can be continuously injected into the Cantor Set.
\end{thm}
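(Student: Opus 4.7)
The plan is to construct a countable family $\mathcal F$ of clopen subsets of $X$ that separates points. Given such a family, the induced map $X \to 2^{\omega}$, $x \mapsto (\mathbf{1}_{C}(x))_{C \in \mathcal F}$, is a continuous injection into the Cantor set.

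For every real $r$, the sets $L_r := f^{-1}((-\infty, r))$ and $R_r := f^{-1}((r, \infty))$ are open in $X$ and partition $X$ up to the fibre $f^{-1}(\{r\})$, which has at most one element by injectivity of $f$. Whenever $r \notin f(X)$, the complement of $L_r$ equals the open set $R_r$, so $L_r$ is already clopen; hence $\mathcal F_0 := \{L_r : r \in \mathbb Q \setminus f(X)\}$ is a countable family of clopen subsets of $X$. If $\mathbb Q \setminus f(X)$ is dense in $\mathbb R$, then $\mathcal F_0$ already separates the points of $X$ --- for distinct $x,y$ with $f(x) < f(y)$ pick $r \in \mathbb Q \setminus f(X)$ between $f(x)$ and $f(y)$ to get $x \in L_r \not\ni y$ --- and the proof terminates.

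In the remaining case, for each rational $r \in f(X)$ write $z_r := f^{-1}(r)$ and use zero-dimensionality of $X$ together with continuity of $f$ at $z_r$ to pick, for every $k \in \mathbb N$, a clopen neighborhood $U_{r,k}$ of $z_r$ with $f(U_{r,k}) \subseteq (r - 1/k, r + 1/k)$. A brief check shows $L_r \setminus U_{r,k}$ is itself clopen: it is open as an intersection of open sets, and its complement $(X \setminus L_r) \cup U_{r,k} = R_r \cup U_{r,k}$ is a union of open sets (using $z_r \in U_{r,k}$). So the enlarged family $\mathcal F := \mathcal F_0 \cup \{U_{r,k}\}_{r,k} \cup \{L_r \setminus U_{r,k}\}_{r,k}$ is still countable and consists of clopens.

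To verify that $\mathcal F$ separates points, take distinct $x, y$ with $f(x) < f(y)$: if some $r \in \mathbb Q \setminus f(X)$ lies in $(f(x), f(y))$, then $L_r \in \mathcal F_0$ separates them; otherwise every rational in $(f(x), f(y))$ is in $f(X)$, and I pick any such rational $r$ together with $k$ large enough that $1/k < \min(r - f(x),\, f(y) - r)$, which forces $x, y \notin U_{r,k}$. Then $L_r \setminus U_{r,k}$ contains $x$ (since $f(x) < r$) but not $y$ (since $f(y) > r$). The main obstacle is exactly this pathological case, where $\mathbb Q \cap (a,b) \subseteq f(X)$ for some interval $(a,b)$; the whole plan hinges on the clopenness of the refinements $L_r \setminus U_{r,k}$ and on the ability to shrink $U_{r,k}$ enough to miss any prescribed pair of points, both consequences of zero-dimensionality of $X$ combined with the continuity of $f$.
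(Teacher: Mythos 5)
Your proof is correct and follows essentially the same route as the paper: a countable point-separating family of clopen sets, built from preimages of half-lines trimmed by small clopen neighborhoods of fibers (obtained from zero-dimensionality plus continuity), followed by the standard evaluation map into $2^\omega$. The only difference is bookkeeping: the paper indexes its family by a countable dense subset of $f(X)$ enlarged to contain all pairs of immediate neighbors, whereas you index by $\mathbb Q$ and handle the gaps of $f(X)$ via rationals outside the image, where the half-line preimages are automatically clopen.
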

\begin{proof}
Fix a countable dense subset $A$ of $f(X)$. We may assume that $a,b\in A$ whenever $a$ and $b$ are immediate neighbors in $f(X)$.
For each $a\in A$ and a positive integer $n$, fix a clopen neighborhood $U_{a,n}$ of $f^{-1}(a)$ such that
$f(U_{a,n})\subset (a-1/n, a+1/n)$. This a possible since  $X$ is zero-dimensional.

Define $\mathcal V$  as follows. A set $V$ is in $\mathcal V$ if and only if it falls into one of the following three categories:
\begin{enumerate}
	\item $V=U_{a,n}$ for some $a\in A$ and a positive integer $n$.
	\item $V=f^{-1}((-\infty, a])\setminus U_{a,n}$ for some $a\in A$ and a positive integer $n$.
	\item $V=f^{-1}([a,\infty ))\setminus U_{a,n}$ for some $a\in A$ and a positive integer $n$.
\end{enumerate}
The conclusion of the theorem follows from the following two claims.
\par\medskip\noindent
{\it Claim 1. Each element of $\mathcal V$ is a clopen subset of $X$.}
\par\smallskip\noindent
{\it Proof of Claim 1.} To prove Claim 1, it is enough to show that sets in category 2 are clopen. Each such set is closed as a difference of a closed set and an open set. Let us show open-ness.  Since $f^{-1}(a)$ is  a subset of
$f^{-1}((-\infty, a])\cap U_{a,n}$, we conclude that
$f^{-1}((-\infty, a])\setminus U_{a,n} =f^{-1}((-\infty, a))\setminus U_{a,n} $. The right hand side of the equality is open as the difference of open and closed sets. The claim is proved.
\par\medskip\noindent
{\it Claim 2. $\mathcal V$ is a countable point-separating family.}
\par\smallskip\noindent
{\it Proof of Claim 2.} Countability follows from countability of $A$ and the set of integers.  Let $x$ and $y$ be two distinct points of $X$.  Since $f$ is one-to-one, we may assume that $f(x)<f(y)$.
 Recall that $A$ is dense in $f(X)$ and contains any two points that are  immediate neighbors in $f(X)$. Therefore, there exists $a\in A$ such that $f(x)\leq a< f(y)$ or $f(x)<a\leq f(y)$.

Assume that $f(x)<a<f(y)$. Let $n$ be a positive integer such that $f(x)<a-1/n$.
Then $x$ is an element of $f^{-1}((-\infty, a])\setminus U_{a,n}$ and $y$ is not.

Assume that $f(x)=a<f(y)$.  Let $n$ be a positive integer such that $a+1/n<f(y)$. Then $x$ is an element of $U_{a,n}$ and $y$ is not. The case $f(x)<a=f(y)$ is treated similarly.
\end{proof}
\par\bigskip\noindent
Note that the zero-dimensional  topology constructed in the proof of the above theorem need not be comparable with the topology on $X$ determined by the given map into $\mathbb R$.  Let $\mathcal T$ denote the topology generated by the union of the these two subtopologies. Clearly, $Y=\langle X, \mathcal T\rangle$ is a zero-dimensional separable metrizable space. Thus, the following factorization version of Theorem \ref{thm:toR} holds.
\par\bigskip\noindent
\begin{cor}\label{cor:toR}
Let $X$ be zero-dimensional and let $f:X\to \mathbb R$ be a continuous injection. Then there exist a subspace $Y$ of the Cantor Set and continuous maps $g:X\to Y$ and $h:Y\to \mathbb R$ such that $f=h\circ g$.
\end{cor}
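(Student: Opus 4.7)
The plan is to take $Y$ to be the underlying set of $X$ equipped with the topology $\mathcal T$ described in the paragraph preceding the corollary: the join of the zero-dimensional topology $\tau_0$ generated by the family $\mathcal V$ from the proof of Theorem~\ref{thm:toR} with the pull-back topology on $X$ determined by $f$. I will let $g\colon X\to Y$ be the identity on underlying sets and $h\colon Y\to\mathbb R$ be $f$ with source retopologized. With these choices, $f=h\circ g$ holds by construction; $g$ is continuous because every subbasic set of $\mathcal T$ is open in the original topology of $X$ (members of $\mathcal V$ are originally clopen and $f$ is originally continuous); and $h$ is continuous because $\mathcal T$ refines the $f$-pull-back of the Euclidean topology.

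Everything then reduces to showing that $Y$ embeds into the Cantor set, for which it suffices to verify that $Y$ is Hausdorff, second countable, and zero-dimensional. Second countability is clear from countability of $\mathcal V$ and of a base for $\mathbb R$, and Hausdorffness follows from Claim~2 in the proof of Theorem~\ref{thm:toR}. For zero-dimensionality I first plan to check that every member of $\mathcal V$ is itself $\mathcal T$-clopen: the complement of a category-1 set equals the union of its associated category-2 and category-3 sets, while the complement of a category-2 or category-3 set equals a member of $\mathcal V$ together with the preimage of an open half-line, both of which lie in $\mathcal T$.

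It then remains, given $x\in X$ and a subbasic neighborhood $f^{-1}((c,d))$ of $x$, to produce a clopen $\mathcal T$-neighborhood of $x$ inside it. If $f(x)\in A$, the clopen set $U_{f(x),n}\in\mathcal V$ works for $n$ large. If $f(x)\notin A$, then the density of $A$ in $f(X)$ together with the convention that immediate neighbors in $f(X)$ lie in $A$ will force $f(x)$ to be a two-sided limit point of $f(X)$ (or, if $f(x)$ is extremal in $f(X)$, a limit from the non-extreme side). In the two-sided case I pick $a,b\in A$ with $c<a<f(x)<b<d$ and take the intersection of the category-3 set $f^{-1}([a,\infty))\setminus U_{a,n}$ with the category-2 set $f^{-1}((-\infty,b])\setminus U_{b,m}$ for $n,m$ sufficiently large, producing a clopen neighborhood of $x$ contained in $f^{-1}([a,b])\subseteq f^{-1}((c,d))$; the extremal case uses just one of these two sets. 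The main obstacle is this subcase $f(x)\notin A$, where the specific construction of $A$ (not merely its density) is used in an essential way; once zero-dimensionality is in hand, the embedding of $Y$ into the Cantor set is standard.
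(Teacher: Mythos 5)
Your proposal is correct and follows exactly the paper's route: the paper's entire argument is the paragraph preceding the corollary, which forms the join $\mathcal T$ of the topology generated by $\mathcal V$ with the $f$-pull-back topology and asserts that $\langle X,\mathcal T\rangle$ is ``clearly'' a zero-dimensional separable metrizable space. You simply supply the verification (that members of $\mathcal V$ are $\mathcal T$-clopen and that points with $f(x)\notin A$ have small clopen neighborhoods) that the paper leaves implicit, and your details check out.
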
  
\par\bigskip
It is natural to wonder if the target space $\mathbb R$, in statements  \ref{thm:toR} and \ref{cor:toR},  can be replaced by any (generalized) ordered space. This leads to our main result presented in the next section.
\par\bigskip

\par\bigskip
\section{Factorization Theorem}\label{S:factorization}

\par\bigskip
In this section, we generalize statements \ref{thm:toR} and \ref{cor:toR} for ranges from a sufficiently wide class. As demonstrated in the previous section, factorization version (Corollary \ref{cor:toR} )  is an effortless corollary of  injectability into the Cantor Set (Theorem \ref{thm:toR}) if the target space is the reals . In general, a topology generated by the union of two GO-topologies need not be  a GO-topology (see \cite{Reed} for examples). We, therefore, prove factorization version directly. We will generalize our statements to GO-spaces from the following class.

\par\bigskip\noindent
{\bf Definition of class $\mathcal L$.} {\it A generalized ordered space $L$ is in $\mathcal L$ if and only if $L$ has a dense subset which is a countable union of discrete (in themselves) subspaces.}

\par\bigskip\noindent
Another way to define this class is as follows.

\par\bigskip\noindent
{\bf Another Definition of class $\mathcal L$.} {\it
A generalized ordered space $L$ is in $\mathcal L$ if and only if $L$ has a $\sigma$-disjoint $\pi$-base.
}

\par\bigskip\noindent
One can easily show that all separable GO-spaces as well as all subspaces of any ordinal are in the class. The long line as well as the lexicographical product of any ordinal and $[0,1)$ are in the class too. Note that the class need not contain subspaces of its members. In our proofs, we will be using the second definition of the class $\mathcal L$. For completeness, we next prove that the definitions are indeed equivalent.

\par\bigskip\noindent
\begin{lem}\label{lem:equivalentdef}
A GO-space $L$ has a $\sigma$-disjoint $\pi$-base if and only if $L$ has a dense subspace that is a countable union of discrete subspaces.
\end{lem}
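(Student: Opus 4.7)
The plan is to prove each implication separately. The forward direction is routine: given a $\sigma$-disjoint $\pi$-base $\bigcup_n \mathcal{B}_n$ with each $\mathcal{B}_n$ pairwise disjoint, I would select $x_B \in B$ for every $B \in \mathcal{B}_n$ and set $X_n = \{x_B : B \in \mathcal{B}_n\}$. Disjointness of $\mathcal{B}_n$ forces $B \cap X_n = \{x_B\}$, so each $X_n$ is discrete, and the $\pi$-base property yields density of $\bigcup_n X_n$ in $L$.

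For the reverse direction, suppose $\bigcup_n X_n \subseteq L$ is dense with each $X_n$ discrete. I plan to build a $\sigma$-disjoint $\pi$-base indexed by triples $(n, m_1, m_2) \in \mathbb{N}^3$ together with a two-coloring. For each such triple and each $y \in X_n$, let $V(n, m_1, m_2, y)$ be the union of all convex open subsets of $L$ that contain $y$ and meet $X_n \cup X_{m_1} \cup X_{m_2}$ only at $y$; this is the largest convex open neighborhood of $y$ with that property. The key combinatorial observation is that if $y_1 < y_2$ are distinct elements of $X_n$ and some point $w$ of $X_n \cup X_{m_1} \cup X_{m_2}$ lies strictly between them, then $w$ is forbidden from both $V(n, m_1, m_2, y_i)$, so convexity forces $V(n, m_1, m_2, y_1) \subseteq (-\infty, w)$ and $V(n, m_1, m_2, y_2) \subseteq (w, \infty)$, making them disjoint. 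Hence inside $\{V(n, m_1, m_2, y) : y \in X_n\}$ the only potentially overlapping pairs are those $y_1, y_2$ that are \emph{adjacent} in $X_n$ with respect to $X_n \cup X_{m_1} \cup X_{m_2}$. The resulting adjacency graph on $X_n$ has maximum degree two and no cycles (since the ambient order is linear), so it is a disjoint union of paths; a two-coloring of these paths splits $X_n$ into color classes on each of which the family $\{V(n, m_1, m_2, y)\}$ is pairwise disjoint. Ranging over all $(n, m_1, m_2)$ and both color classes yields a $\sigma$-disjoint collection $\mathcal{B}$.

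To verify that $\mathcal{B}$ is a $\pi$-base, take a non-empty open $O \subseteq L$, use the GO structure to shrink to a convex open $O' \subseteq O$ meeting some $X_n$ at a point $y$, and use density of $\bigcup_k X_k$ in $O'$ to pick $p_1 < y < p_2$ in $O'$ with $p_1 \in X_{m_1}$ and $p_2 \in X_{m_2}$. Since both $p_i$'s lie in the forbidden set for $V(n, m_1, m_2, y)$, maximality and convexity confine this set to the interval between $p_1$ and $p_2$ inside $L$, hence into $O' \subseteq O$. When $y$ is an endpoint of $O'$ or is isolated in $L$, I would handle the degenerate case by a one-sided variant of this construction (or by separately adjoining the trivially disjoint family of singletons $\{y\}$ for isolated $y \in L$). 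The step I expect to require the most care is precisely this uniform handling of endpoint and one-sided cases, together with checking that the maximal convex construction interacts correctly with gaps of $L$ in its Dedekind completion, so that the two-coloring argument still produces genuinely disjoint open families.
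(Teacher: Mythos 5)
Your forward direction matches the paper's and is fine. For the reverse direction you take a genuinely different route from the paper, and as written it has a gap at the one point where your route differs most from theirs: you never show that $V(n,m_1,m_2,y)$ is non-empty, and in general it need not be. The set $X_{m_1}$ is only assumed discrete \emph{in itself}; it can perfectly well accumulate at a point $y\in X_n\setminus X_{m_1}$ (e.g.\ $X_{m_1}=\{1/k:k\geq 1\}$ and $y=0$ in $\mathbb R$). In that case \emph{every} convex open set containing $y$ meets $X_{m_1}\setminus\{y\}$, so the union defining $V(n,m_1,m_2,y)$ is a union over the empty collection, i.e.\ $V=\emptyset$. Your $\pi$-base verification then breaks: you pick $y\in X_n\cap O'$ and $p_1\in X_{m_1}$, $p_2\in X_{m_2}$ on either side, and correctly conclude $V(n,m_1,m_2,y)\subseteq(p_1,p_2)\subseteq O$, but nothing guarantees that this $V$ is a non-empty member of your family --- the particular pieces $X_{m_1},X_{m_2}$ furnished by density may be exactly the ones converging to $y$. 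The issues you flag as delicate (endpoints, Dedekind gaps) are comparatively harmless; this is the real one.

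The gap is fillable, but it needs an actual argument, not just more care: one must show that \emph{some} admissible choice of $(y,m_1,m_2)$ inside $O'$ yields a non-empty $V$. A workable line: if $y\in X_n$ is ``bad on the left'' (every $X_m$ meeting $O'$ below $y$ accumulates at $y$), then in particular $X_n$ itself has no point of $O'$ below $y$ (since $X_n$ cannot accumulate at its own point), so each $X_m$ contributes at most one left-bad and one right-bad point to $O'$; combining this with the fact that a discrete-in-itself set is nowhere dense in a crowded convex set, one shows not every point of the infinite set $D\cap O'$ can be bad. The paper avoids all of this by not anchoring its convex sets at points of the dense set: after splitting off the isolated points, it takes $\mathcal I_{ij}$ to be the \emph{maximal non-empty} convex open subsets of $M\setminus(A_i\cup A_j)$, which are disjoint by maximality and non-empty by construction, and whose existence inside a given $(a,c)$ follows from nowhere density of $A_i\cup A_j$. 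That is the cleaner decomposition; your two-coloring of the adjacency graph (which is itself correct --- the graph has maximum degree two and no cycles, hence is bipartite) is only needed because your sets contain the anchor points and therefore adjacent ones can overlap.
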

\begin{proof}
($\Rightarrow$). Fix a $\sigma$-disjoint $\pi$-base and then select a point from each element of the base. The set of selected points is as desired.
\par\medskip\noindent
($\Leftarrow$) Let $A_0$ be the set of all isolated points of $L$ and $M=L\setminus \overline A_0$.
If $M$ is empty, then $\{\{a\}:a\in A_0\}$ is a desired $\pi$-base. We now assume that $M$ is not empty.

 The set $M$ has no isolated points. By Lemma's hypothesis, for any $n\geq 1$, we can find $A_n$, a discrete (in itself) subspace of $M$, so that $\cup_n A_n$ is dense in $M$.

 Put
$\mathcal I_0 = \{\{a\}:a\in A_0\}$.
Next, for each $i,j>0$, define $\mathcal I_{ij}$ as follows: $I\in \mathcal I_{ij}$ if and only if $I$ is a maximal non-empty convex subset of $M\setminus (A_i\cup A_j)$ that is open in $L$. By maximality, $\mathcal I_{ij}$ is a disjoint family of non-empty open convex subsets of $L$.

Let us show that $\mathcal I_0 \cup [\bigcup_{i,j}\mathcal I_{ij}]$ is a $\pi$-base for $L$. Fix any non-empty open subset $U$ of $L$. We may assume that $U$ is convex. If $U$ has an isolated point $a$, then  $a\in A_0$. We then  have $\{a\}\in \mathcal I_0$ and $\{a\}\subset U$. Assume that $U$ does not have isolated points. Since  $\cup_n A_n$ is dense in $M$, we can find three distinct points $a,b$ and $c$ in $\cup_n A_n$ such that $a,c\in U$ and $a<b<c$. Fix indices $i$ and $j$ such that
$a\in A_i$  and $c\in A_j$.  Since both $A_i$ and $A_j$   are discrete in themselves and $M$ has no isolated points, we conclude that $A_i\cup A_j$ is nowhere dense in $U$. Since $a<b<c$, the interval $(a,c)_L$ is a non-empty open subset of $U$ that has no isolated points. 
Therefore, there exists $I\in \mathcal I_{ij}$ such that $I$ is between $a$ and $c$. Since $U$ is convex, $I$ is a subset of $U$.
\end{proof}

\par\bigskip\noindent
The following folklore-type statement will be used in the main result and is proved  for completeness.
\par\bigskip\noindent
\begin{lem}\label{lem:limitorder}
Let $R_n$ be an order relation on $S$ for each $n$. Suppose that for any distinct $x,y\in S$ there exists $n_{xy}$ such that
either $xR_ny$ for all $n>n_{xy}$ or $yR_nx$ for all $n>n_{xy}$. Then $R$ is a linear order on $X$, where  
$$
R=\{\langle x,y\rangle: xR_ny \ for\  all\ n>n_{xy}\}
$$
\end{lem}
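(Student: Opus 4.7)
The plan is to verify directly that $R$ satisfies the defining properties of a strict linear order on $S$: trichotomy, asymmetry, and transitivity. For the first two, fix distinct $x,y\in S$. The hypothesis supplies $n_{xy}$ so that one of the two tails ``$xR_ny$ for all $n>n_{xy}$'' or ``$yR_nx$ for all $n>n_{xy}$'' holds. Each $R_n$ is an order and hence asymmetric, so the two tails cannot hold simultaneously. Consequently exactly one of $xRy$ and $yRx$ belongs to $R$, which yields both comparability and asymmetry; irreflexivity is immediate since no $R_n$ ever relates $x$ to itself.

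The only substantive step is transitivity. Suppose $xRy$ and $yRz$; then $xR_ny$ for all $n>n_{xy}$ and $yR_nz$ for all $n>n_{yz}$. Setting $N=\max\{n_{xy},n_{yz}\}$, transitivity of each $R_n$ yields $xR_nz$ for every $n>N$. To conclude $\langle x,z\rangle\in R$, I apply the hypothesis to the pair $(x,z)$: either $xR_nz$ holds on an eventual tail, or $zR_nx$ does. The second alternative would collide with $xR_nz$ at all sufficiently large $n$, violating asymmetry of $R_n$. Hence the first alternative holds and $xRz$, as required.

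I do not anticipate any real obstacle: the argument is a short eventual-behavior passage, and the hypothesis is engineered to deliver both trichotomy and the compatibility condition needed in transitivity. The only mild subtlety is that the witness $n_{xz}$ supplied by the hypothesis is a priori unrelated to $\max\{n_{xy},n_{yz}\}$, but intersecting both tails on a sufficiently large index and invoking asymmetry of $R_n$ forces the correct alternative. No feature of the $R_n$ beyond asymmetry and transitivity is used, so the proof reduces to a clean two-paragraph verification.
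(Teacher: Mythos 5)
Your proof is correct and follows essentially the same route as the paper's: direct verification of irreflexivity, comparability, and transitivity, with transitivity handled by passing to $n>\max\{n_{xy},n_{yz}\}$ and using transitivity of each $R_n$. Your extra step at the end of transitivity---using asymmetry of $R_n$ to reconcile the tail $n>\max\{n_{xy},n_{yz}\}$ with the witness $n_{xz}$ appearing in the definition of $R$---is a point the paper glosses over, and it is a worthwhile clarification rather than a deviation.
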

\begin{proof}
Since every element of $R$ is an element of $R_n$ for some $n$, we conclude that $\langle x,x\rangle\not\in R$ for any $x$. Therefore, $R$ is irreflexive. To show comparability, fix any distinct $x,y\in S$. By Lemma's hypothesis, we may assume that $\langle x,y\rangle \in R_n$ for all $n>n_{xy}$. By the definition of $R$, $\langle x,y\rangle \in R$. To show transitivity, fix $\langle x,y\rangle, \langle y,z\rangle\in R$. Let $m=\max\{n_{xy},n_{yz}\}$. Then, $\langle x,y\rangle, \langle y,z\rangle\in R_n$ for all $n>m$. By transitivity of $R_n$, we conclude that $\langle x,z\rangle\in R_n$ for all $n>m$. Therefore, $\langle x,z\rangle\in R$.
\end{proof}

\par\bigskip
We are ready to prove our main result.

\par\bigskip\noindent
\begin{thm}\label{thm:zerodim}
Let $X$ be a zero-dimensional space and $L\in \mathcal L$. If $f:X\to L$ is a continuous bijection, then there exist a        zero-dimensional GO-space $M$ of weight at most that of $L$ and continuous bijections $g:X\to M$ and $h:M\to L$ such that $f=h\circ g$.
\end{thm}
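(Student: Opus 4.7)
My plan is to adapt the construction of Theorem~\ref{thm:toR}, now selecting the separating clopen sets so that they are additionally convex in the order $<_M$ pulled back from $<_L$ via $f$; the topology they generate will then be a zero-dimensional GO-topology realizing $M$.

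Set $x <_M y$ iff $f(x) <_L f(y)$. By Lemma~\ref{lem:equivalentdef}, fix a dense subset $A = \bigcup_n A_n \subset L$ with each $A_n$ discrete in itself. Discrete subspaces of a GO-space $L$ have cardinality at most $w(L)$, and we may enlarge $A$ to contain both endpoints of every immediate-neighbor pair in $L$ while preserving $|A|\le w(L)$. For each $a\in A$, fix a neighborhood base $\mathcal N_a$ of convex open sets at $a$ in $L$ with $|\mathcal N_a|\le w(L)$, and let $x_a:=f^{-1}(a)$.

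The main technical step, and the principal obstacle, is to produce for each $a\in A$ and $V\in\mathcal N_a$ a $\tau_X$-clopen and $<_M$-convex neighborhood $U_{a,V}$ of $x_a$ with $U_{a,V}\subset f^{-1}(V)$. The natural attempt is to take a clopen $N\ni x_a$ inside $f^{-1}(V)$ (which exists by zero-dimensionality of $X$) and replace it by its $<_M$-convex hull $N^h$. Then $N^h\subset f^{-1}(V)$ (convexity of $f^{-1}(V)$), and $N^h$ is $\tau_X$-open because any $y\in N^h\setminus N$ is straddled by two points $u,v\in N$ in $<_M$, and the order-interval $(u,v)_{<_M}=f^{-1}((f(u),f(v))_L)$ is a $\tau_X$-open neighborhood of $y$ inside $N^h$. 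The delicate part is $\tau_X$-closedness: this requires a case analysis of the $L$-structure at $\inf f(N)$ and $\sup f(N)$ (whether attained in $f(N)$, present in $L$ but unattained, or a gap), possibly preceded by a mild enlargement of $N$ using zero-dimensionality to force the extremes to be attained, so that the complement $f^{-1}(L\setminus C)$ (with $C$ the convex hull of $f(N)$ in $L$) becomes a union of $f^{-1}$-images of open rays.

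With $U_{a,V}$ secured, put
\[
\mathcal V = \{U_{a,V}\} \cup \bigl\{ f^{-1}((-\infty,a]_L)\setminus U_{a,V}\bigr\} \cup \bigl\{ f^{-1}([a,\infty)_L)\setminus U_{a,V}\bigr\},
\]
indexed over $a\in A$ and $V\in\mathcal N_a$. Each element is $\tau_X$-clopen by the argument of Theorem~\ref{thm:toR}, and each is $<_M$-convex: for the middle type, if $y<_M w<_M z$ with $y,z$ in the set, convexity of $f^{-1}((-\infty,a]_L)$ gives $w\in f^{-1}((-\infty,a]_L)$, while $f(y),f(z)<a=f(x_a)$ and the $<_M$-convexity of $U_{a,V}$ (containing $x_a$) force $y, z$, and hence $w$, to lie strictly below $U_{a,V}$. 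Let $\tau_M$ be the topology generated by $\mathcal V$; then $M:=(X,\tau_M)$ is a zero-dimensional GO-space with order $<_M$ and weight at most $|\mathcal V|\le w(L)$. The identity $g:X\to M$ is continuous since $\tau_M\subset\tau_X$, and $h:M\to L$ (set-theoretically equal to $f$) is continuous: for each basic convex open $V=(p,q)_L$ and each $x\in f^{-1}(V)$, intersect suitable middle- and right-type members of $\mathcal V$ indexed by $a\in A\cap(p,f(x))$ and $a'\in A\cap(f(x),q)$ to produce a $\tau_M$-neighborhood of $x$ inside $f^{-1}(V)$, with the immediate-neighbor augmentation covering the degenerate cases.
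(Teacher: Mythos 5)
Your plan fixes the order on $M$ once and for all as the pullback $x<_M y \Leftrightarrow f(x)<_L f(y)$, and then everything rests on the step you yourself flag as the principal obstacle: producing, for each $a$ and $V$, a $\tau_X$-clopen and $<_M$-convex neighborhood $U_{a,V}\subset f^{-1}(V)$ of $x_a$. This step is not merely delicate; it is false in general, so no case analysis at the endpoints and no enlargement of $N$ can rescue it. Take $L=\mathbb R$ (separable, hence in $\mathcal L$) and let $X$ be the set $\mathbb R$ with the topology generated by the Euclidean topology together with the two sets $\mathbb Q$ and $\mathbb P=\mathbb R\setminus\mathbb Q$; thus $X$ is the topological sum of $\mathbb Q$ and $\mathbb P$ with their Euclidean topologies, a zero-dimensional space, and $f=\mathrm{id}\colon X\to\mathbb R$ is a continuous bijection. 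A $<_M$-convex subset of $X$ is just an interval $C\subset\mathbb R$, and $C$ is clopen in $X$ if and only if $C\cap\mathbb Q$ is clopen in $\mathbb Q$ and $C\cap\mathbb P$ is clopen in $\mathbb P$. If $C$ is a proper nonempty interval with, say, finite left endpoint $p$, then: if $p\in\mathbb Q$ and $p\in C$, the set $C\cap\mathbb Q$ is not open in $\mathbb Q$ (rationals just below $p$ lie outside $C$); if $p\in\mathbb Q$ and $p\notin C$, the set $C\cap\mathbb Q$ is not closed in $\mathbb Q$ ($p$ is a limit of rationals in $C$); and the case $p\in\mathbb P$ is symmetric. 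Hence the only $<_M$-convex clopen subsets of $X$ are $\emptyset$ and $\mathbb R$, and no $U_{a,V}$ as required exists for any nondegenerate $V$. In particular the convex hull $N^h$ of a clopen $N$ is never closed here unless trivial, and ``enlarging $N$ so that the extremes are attained'' cannot terminate.

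This is precisely why the paper's proof does not keep the pulled-back order. It inductively builds new orders $<_n$ and topologies $\mathcal T_n$: inside each connected piece $C$ of the current space meeting a $\pi$-base element $I$, it chooses a clopen-in-$X$ set $V_{CI}\subset C\cap I$ and redefines the order so that $V_{CI}$ is lifted out and re-inserted as a single convex block between the remaining left part $L_{CI}$ and right part $R_{CI}$; the limit order $<_M$ and topology $\mathcal T_M$ then admit a base of convex clopen sets. In the example above this amounts to re-ordering $\mathbb R$ so that $\mathbb Q$ and $\mathbb P$ are no longer interleaved. The missing idea in your proposal is this re-ordering itself: with the order frozen as $f^{-1}(<_L)$, the conclusion of the theorem cannot in general be reached, so the remainder of your argument (which is sound modulo the existence of the $U_{a,V}$) has nothing to stand on.
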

\begin{proof} 
Fix a $\pi$-base  $\bigcup_n \mathcal I_n$  of $L$, where each $\mathcal I_n$ is a disjoint family of convex subsets of $L$. We may assume that $\mathcal I_0=\emptyset$.  We may also assume that the ground set for $X$ and $L$ is the same set $S$. Thus, $X=\langle S, \mathcal T_X\rangle$ and $L=\langle S,<_L,\mathcal T_L\rangle$. For simplicity we may write $<$ instead of $<_L$.

For each $n\in \omega$ we will define families to be later used in the  construction of our middle space $M$.

\par\bigskip\noindent
{\bf Step $0$.} Put 
$\mathcal A_0 =\emptyset$,  $\mathcal B_0=<_L$,  $<_0=\mathcal A_0\cup \mathcal B_0$, and $\mathcal T_0=\mathcal T_L$.

\par\medskip\noindent
{\bf Assumption.} Assume that for each $k=0,...,n-1$ we have defined relations $\mathcal A_k$ and $\mathcal B_k$ on $S$ and a topology $\mathcal T_k$ on $S$ so that the following properties are satisfied:
\begin{description}
	\item[\rm P1] $\mathcal A_m\subset \mathcal A_k$ whenever $m<k$.
	\item[\rm P2] $\mathcal B_m\supset \mathcal B_k$ whenever $m<k$.
	\item[\rm P3] $<_k=\mathcal A_k\cup \mathcal B_k$ is a linear order relation on $S$.
	\item[\rm P4] If $I\in \mathcal I_k$, then $I$ contains a non-empty set which is  clopen with respect to $\mathcal T_k$.	
	\item[\rm P5] If $U$ is $<_m$-convex and  clopen in $\langle S,\mathcal T_m\rangle$, then $U$ is $<_k$-convex and clopen  in $\langle  S, \mathcal T_k\rangle$ whenever $m<k$.
	\item[\rm P6] $\langle S, <_k, \mathcal T_k\rangle$ is a GO-space.
	\item[\rm P7] $\mathcal T_m\subset \mathcal T_k\subset \mathcal T_X$ whenever $m\leq k$.
	\item[\rm P8] The weight of $\langle S,\mathcal T_k\rangle$ is equal to the weight of $L$.
\end{description}
\par\medskip\noindent
Note that the conditions are met for $k=0$.

\par\medskip\noindent
{\bf  Step $n>0$.} Let $\mathcal C_n$ be the collection  of maximal   subsets of $\langle S, <_{n-1}, \mathcal T_{n-1}\rangle$ that are open, connected, and without end-points.
For each $C\in \mathcal C_{n}$ and $I\in \mathcal I_n$, such that $C\cap I\not = \emptyset$, select $y_{CI}\in C\cap I$. By P7, $C\cap I$ is open in $X$. Since $X$ is zero-dimensional, there exists  $V_{CI}\subset C\cap I$ such that $y_{CI}\in V_{CI}$ and $V_{CI}$ is clopen in $X$.

\par\medskip\noindent
Put 
$$
L_{CI}=\{x\in  (C\cap I)\setminus V_{CI}: x<_{n-1}y_{CI}\}\ {\rm and}\  R_{CI}=\{x\in (C\cap  I)\setminus V_{CI}: y_{CI}<_{n-1} x\}.
$$

\par\medskip\noindent
Define $\mathcal A'_n$ as follows.  A pair $\langle x,y\rangle$ is in $\mathcal A_n'$ if and only if there exist  $C\in \mathcal C_{n}$ and $I\in\mathcal I_n$ such that $I\cap C\not = \emptyset$ and one of the following holds:
\par\smallskip\noindent
\begin{enumerate}
	\item $(x\in L_{CI})\wedge (y\in V_{CI})$,
	\item $( x\in L_{CI})\wedge (y\in R_{CI})$, 
	\item $(x\in V_{CI})\wedge (y\in R_{CI})$.
\end{enumerate}
\par\smallskip\noindent
In words, when restricted to $I\cap C$, the relation  $\mathcal A_n'$ proclaims that $L_{CI}$ is "less than" $V_{CI}$ and that $V_{CI}$ is  "less than" $R_{CI}$.
\par\smallskip\noindent
Finally define $\mathcal A_n$, $\mathcal B_n$, and $<_n$ as follows.
$$
\mathcal A_n=\mathcal A_{n-1}\cup \mathcal A_n'\  and\ 
\mathcal B_n= \mathcal B_{n-1}\setminus \{\langle x,y\rangle:\langle x,y\rangle or \langle y,x\rangle \ is \ in \ \mathcal A_n'\},
$$
$$
<_n=\mathcal A_n \cup \mathcal B_n
$$
\par\bigskip\noindent
\par\bigskip\noindent
Let us check P1-P3. Properties P1 and P2 follow from the definition.
\par\smallskip\noindent
\underline {\it Check of P3.} We need to verify irreflexivity, comparability, and transitivity of $<_n$.
\begin{description}
	\item[\rm Irreflexivity] Fix $\langle x,y\rangle\in <_n$. Assume $\langle x,y\rangle\in \mathcal B_n$. By P2, $\langle x,y\rangle\in \mathcal B_{n-1}$. By P3, $x\not = y$. Assume $\langle x,y\rangle\in \mathcal A_{n-1}$. By P3, $x\not = y$. If $\langle x,y\rangle\not \in \mathcal A_{n-1}\cup \mathcal B_n$, then $\langle x,y\rangle\in \mathcal A_n'$. Then $x$ and $y$ satisfy (1), (2), or (3) in the definition of $\mathcal A_n'$ for some $C\in \mathcal C_n$ and $I\in \mathcal I_n$. Since the sets $L_{CI}, V_{CI}, R_{CI}$ are disjoint, we conclude that $x\not = y$.
	\item[\rm Comparability] Fix distinct $x$ and $y$. Since $<_{n-1}$ is an order, we may assume that $\langle x,y\rangle \in <_{n-1}$.  If $\langle x,y\rangle\in \mathcal A_{n-1}$, then, by P1, $\langle x,y\rangle \in \mathcal A_n\in <_n$. Otherwise, $\langle x,y\rangle\in \mathcal B_{n-1}$. If $\langle x,y\rangle\in \mathcal B_n$, then $\langle x,y\rangle \in <_n$. Otherwise, $\langle x,y\rangle\in \mathcal B_{n-1}\setminus \mathcal B_n$. By the definition of $\mathcal B_n$, we conclude that either $\langle x,y\rangle$ or $\langle y,x\rangle$ is in $\mathcal A_n'\in <_n$.
	\item[\rm Transitivity] Fix $\langle x,y\rangle$ and $\langle y,z\rangle$ in $<_n$. We have four cases to consider.
\begin{description}
	\item[\rm Case ($\langle x,y\rangle, \langle y,z\rangle \not \in \mathcal A_n'$)] The assumption implies that 
$\langle  x,y\rangle, \langle y,z\rangle\in \mathcal A_{n-1}\cup \mathcal B_{n-1}=<_{n-1}$. Since $<_{n-1}$ is an order, $\langle x,z\rangle \in <_{n-1}$.
\par\smallskip
Assume first that $\langle x,z\rangle \not \in \mathcal A_n'$. Then $\langle x,z\rangle \in \mathcal B_n\cup \mathcal A_{n-1}\subset <_n$. 

\par\smallskip
Now assume that that $\langle x,z\rangle \in \mathcal A_n'$.
Then there exist $I\in \mathcal I_n$ and $C\in \mathcal C_n$ such that $x,z$ are in $ C\cap I$ and belong to distinct elements of $\mathcal P=  \{L_{CI}, V_{CI}, R_{CI}\}$. Since $C$ is connected with respect to $\mathcal T_{n-1}$ and P7 holds for $n-1$, we conclude that $\mathcal T_0|_C=\mathcal T_{n-1}|_C$. Since $I$ is convex with respect to $<_0$, the set $C\cap I$ is an open and connected subset of $\langle S, <_{n-1}, \mathcal T_{n-1}\rangle$. Since $x,z\in C\cap I$, $x<_{n-1} y$, and $y<_{n-1} z$, we conclude that $y\in C\cap I$. Then either $x$ and $y$, or $y$ and $z$ are separated by $\mathcal P$. Therefore, either $\langle x,y\rangle\in \mathcal A_n'$ or $\langle y,z\rangle\in \mathcal A_n'$, contradicting the case's assumption. Therefore, the inclusion $\langle x,z\rangle \in \mathcal A_n'$ cannot occur.

	\item[\rm Case ($\langle x,y\rangle \not\in \mathcal A_n', \langle y,z\rangle \in \mathcal A_n'$)] Since $\langle y,z\rangle\in \mathcal A_n'$, there exist $C\in \mathcal C_n$ and $I\in \mathcal I_n$ such that $y$ and $z$ are in $C\cap I$ and belong to distinct elements of $ \{L_{CI}, V_{CI}, R_{CI}\}$. Due to similarity in reasoning, we may assume that $y\in L_{CI}$ and $z\in V_{CI}$.

\par\smallskip
Assume first that $x\in C\cap I$. Since $\langle x,y\rangle \not \in \mathcal A'$, we conclude that $x\in L_{CI}$. Then $\langle x,z\rangle\in \mathcal A_n'\subset <_n$.

\par\smallskip
Assume now that $x\not \in C\cap I$. Since $\langle x,y\rangle\not \in \mathcal A_n'$, we conclude that $x<_{n-1}y$. Since $I\cap C$ is $<_{n-1}$-connected, we conclude that $x<_{n-1}z$. Since 
$\{C\cap I: C\in \mathcal C_n, I\in \mathcal I_n, C\cap I\not = \emptyset\}$ is a disjoint family, neither $\langle x,z\rangle$ nor $\langle z,x\rangle$ is in $\mathcal A_n'$. Therefore, $\langle x,z\rangle\in \mathcal A_n\subset <_n$.

	\item[\rm Case ($\langle x,y\rangle \in \mathcal A_n', \langle y,z\rangle \not \in \mathcal A_n'$)] Similar to the previous case.
	\item[\rm Case ($\langle x,y\rangle, \langle y,z\rangle \in \mathcal A_n'$)] Since 
$\{C\cap I: C\in \mathcal C_n, I\in \mathcal I_n, C\cap I\not = \emptyset\}$ is a disjoint family, we conclude that $x,y$, and $z$ belong to the same element of this family -  $C\cap I$. Since $\langle y,z\rangle \in \mathcal A_n'$, we conclude that $y$ cannot be in $R_{CI}$. Since $\langle x,y\rangle \in \mathcal A_n'$, we conclude that $y$ cannot be in $L_{CI}$. Therefore, $x\in L_{CI}, y\in V_{CI}, z\in R_{CI}$. By the definition of $\mathcal A_n'$, we conclude that $\langle x,z\rangle\in \mathcal A_n'\subset <_n$.
\end{description}

\end{description}

\par\bigskip\noindent
Before verifying the remaining properties P4-P7 of $<_n$, let us make two claims for future reference.
\par\bigskip\noindent
{\it Claim 1.}  $L_{CI}<_nV_{CI}<_n R_{CI}$.
\par\noindent
The statement of this claim follows from our word description of $\mathcal A_n'$.
\par\smallskip\noindent
{\it Claim 2.} {\it  If $<_n$ differs from $<_{n-1}$ on $\{x,y\}$, then  $x$ is  in one of $L_{CI}, V_{CI},R_{CI}$ and $y$ is in one of the other two for some $C$ and $I$.}
\par\noindent
To prove the claim, we may assume that $x<_{n-1} y$. If $\langle x,y\rangle$ were in $\mathcal A_{n-1}$, then
$x<_n y$ would have been true by P1. Therefore, $\langle x,y\rangle \in \mathcal B_{n-1}$. Since $\langle x,y\rangle\not\in <_n$, we conclude that $\langle x,y\rangle\not \in \mathcal B_n$. By the definition of $\mathcal B_n$, we have $\langle y,x\rangle\in\mathcal A'_n$.  The definition of $\mathcal A_n'$ implies that $x$ and $y$ are in the described sets. The claim is proved.

\par\bigskip\noindent
For properties P4-P7,  we define a new topology on $S$ as follows:

\par\medskip\noindent
{\it $\mathcal T_n$ is generated by $<_n$-open sets, $<_{n-1}$-convex sets clopen with respect to $\mathcal T_{n-1}$, and
$\{V_{CI}, S\setminus V_{CI} : C\in \mathcal C_n, I\in \mathcal I_n, C\cap I\not = \emptyset\}$. }

\par\medskip\noindent
Let us verify P4-P7.
\par\smallskip\noindent
\underline {\it Check of P4.} If $I$ is not connected with respect to $\mathcal T_{n-1}$, then $I$ contains an $<_{n-1}$-convex set $U$ which  is clopen with respect to $\mathcal T_{n-1}$. By the definition of $\mathcal T_n$, the set $U$ is in $\mathcal T_n$. Otherwise, $I$ is a non-trivial connected set with respect to $\mathcal T_{n-1}$. Then there exists $C\in \mathcal C_n$ such that $I\cap C$ is not empty. Then $U=V_{CI}$ is as desired.
\par\smallskip\noindent
\underline {\it Check of P5.} Assume that $U$ is $<_m$-convex and clopen in $\langle S, \mathcal T_m\rangle$ for some $m<n$. Then, by induction, $U$ is $<_{n-1}$-convex and clopen in $\langle S,\mathcal T_{n-1}\rangle$. By Claim 2, $U$ is $<_n$-convex. By the definition of $\mathcal T_n$, $U$ is clopen in $\langle S,\mathcal T_n\rangle$.
\par\smallskip\noindent
\underline {\it Check of P6.} Note that  $\{V_{CI}: C\in \mathcal C_n, I\in \mathcal I_n, C\cap I\not = \emptyset\}$ consists of $<_n$-convex sets. By Claim 2, every $<_{n-1}$-convex set clopen with respect to $\mathcal T_{n-1}$ is $<_n$-convex. Therefore, $\mathcal T_n$ is generated by $<_n$-convex sets.
\par\smallskip\noindent
\underline {\it Check of P7.} To show $\mathcal T_n\subset \mathcal T_X$, fix $U\in \mathcal T_n$. Note that all $V_{CI}$'s are open in $X$ by construction. If $U$ is  $<_{n-1}$-convex and  clopen with respect to $\mathcal T_{n-1}$, then $U\in \mathcal T_X$ by P5 for $n-1$. Therefore, we may assume that $U=\{x:x<_n a\}$ for some $a\in S$. There are two cases.
\begin{description}
	\item[\rm Case 1] The assumption is that $a\in I\cap C$ for some $I\in\mathcal I_n$ and $C\in \mathcal C_n$. If $a\in L_{CI}$, then $\{x:x<_na\}=\{x:x<_{n-1} a\}\setminus V_{CI}$. Since $V_{CI}$ is clopen in $X$ by construction and $\{x:x<_{n-1} a\}$ is open in $X$ by assumption for $n-1$, the difference is open in $X$ too. Case $a\in V_{CI}$  and $a\in R_{CI}$ are handled similarly. 
	\item[\rm Case 2] The assumption is that  $a\not \in I\cap C$ for any $I\in\mathcal I_n$ and $C\in \mathcal C_n$. By Claim 2, $\{x:x<_na\}=\{x:x<_{n-1}a\}$. The right side is open in $X$ by assumption.
\end{description}
To prove that $\mathcal T_{n-1}\subset \mathcal T_n$, fix $U\in \mathcal T_{n-1}$. We may assume that $U$ is $<_{n-1}$-convex. If $U$ is clopen with respect to $\mathcal T_{n-1}$, then $U\in \mathcal T_n$ by definition. Therefore, we may assume that $U=\{x:x<_{n-1}a\}$ for some $a$. We have two cases.
\begin{description}
	\item[\rm Case 1] The assumption is that $a\in I\cap C$ for some $I\in\mathcal I_n$ and $C\in \mathcal C_n$. If $a\in L_{CI}$, then $\{x:x<_{n-1}a\}=\{x:x<_{n} a\}\cup \{x\in V_{CI}: x<_{n-1}a\}$. Since $<_n$ and $<_{n-1}$ coincide on $V_{CI}$, we conclude that $\{x\in V_{CI}: x<_{n-1}a\}$ is in $\mathcal T_n$. Hence, the union is in $\mathcal T_n$. Case $a\in V_{CI}$  and $a\in R_{CI}$ are handled similarly. 
	\item[\rm Case 2] The assumption is that  $a\not \in I\cap C$ for any $I\in\mathcal I_n$ and $C\in \mathcal C_n$. By Claim 2, $\{x:x<_{n-1}a\}=\{x:x<_{n}a\}$. The right side is in $\mathcal T_n$ by definition.
\end{description}
\par\smallskip\noindent
\underline {\it Check of P8.} Note that $\langle S,<_n,\mathcal T_n\rangle$ is obtained from $\langle S, <_{n-1}, \mathcal T_{n-1}\rangle$ by lifting $V_{CI}$ and then inserting it between $L_{CI}$ and $R_{CI}$ for each $C\in \mathcal C_n$ and $I\in \mathcal I_n$ with non-empty intersection. Since both $\mathcal C_n$ and $\mathcal I_n$ are disjoint collections of open sets in  $\langle S, <_{n-1}, \mathcal T_{n-1}\rangle$, we conclude that $\langle S,<_n,\mathcal T_n\rangle$ and $\langle S, <_{n-1}, \mathcal T_{n-1}\rangle$ have the same density, and hence weight. The latter has the same weight as $L$ by P8 for $n-1$.

\par\bigskip\noindent
The inductive construction is complete. Put $<_M=[\bigcup_n \mathcal A_n]\cup [\bigcap_n \mathcal B_n]$.
\par\bigskip\noindent
{\it Claim 3.} {\it $<_M$ is an order on $S$.}
\par\smallskip\noindent
By Lemma \ref{lem:limitorder}, to prove the claim it suffices to show that 
$$
<_M=\{\langle x,y\rangle: x<_n y \ for \ all \  large \ enough \ n\}.
$$ 
To prove the inclusion "$\subset $", fix $\langle x,y\rangle\in <_M$. If $\langle x,y\rangle \in \mathcal A_N$ for some $N$, then $x<_ny$ for all $n>N$. If $\langle x,y\rangle\in \bigcap_n \mathcal B_n$, then $x<_n y$ for all $n>1$. Therefore, $<_M\subset \{\langle x,y\rangle: x<_n y \ for \ all \  large \ enough \ n\}$.

To prove the inclusion "$\supset $", fix $\{x,y,K\}$ such that $x<_n y$ for all $n>K$.  Assume that $\langle x,y\rangle \in \mathcal A_N$ for some $N$. Then $x<_My$. Otherwise, $\langle x,y\rangle\in \mathcal B_n$ for all $n>K$. By P2, $\langle x,y\rangle\in \mathcal B_n$ for all $n\leq K$. Therefore, $\langle x,y\rangle \in \bigcap_n\mathcal B_n\subset <_N$. The claim is proved. 
\par\medskip\noindent
Let $\mathcal T_M$ be the topology generated by $\bigcup_n \mathcal T_n$. 

\par\bigskip\noindent
{\it Claim 4. The weight of $\langle S,\mathcal T_M\rangle$ is equal to that of $L$.}
\par\smallskip\noindent
The statement is a direct corollary of P8 and the definition of $\mathcal T_M$.
\par\bigskip\noindent
{\it Claim 5.} $\mathcal T_L\subset \mathcal T_M\subset \mathcal T_X$.
\par\smallskip\noindent
The statement of the claim  follows from  Property P7 and the definition of $\mathcal T_M$.
\par\bigskip\noindent
{\it Claim 6.} $\langle S, <_M, \mathcal T_M\rangle$ is a zero-dimensional GO-space.
\par\smallskip\noindent
To prove the claim, first observe that $M$ is Hausdorff due to inclusion $\mathcal T_L\subset \mathcal T_M$. It is left to show that for any $U\in \mathcal T_M$ and $x\in U$, there exists a $<_M$-convex set $O$ that is clopen in $M$ and $x\in O\subset U$.

Since $\mathcal T_M$ is generated by $\bigcup_n \mathcal T_n$, we may assume that $U\in \mathcal T_n$ for some $n$.
Let $C_x$ be a maximal  connected subset of $U$ with respect to $\mathcal T_n$ that contains $x$. We have the following cases:
\begin{description}
	\item[\rm Case 1] The assumption is  $C_x=\{x\}$.  Since $\langle S,<_n,\mathcal T_n\rangle$ is a GO-space, there exists a $<_n$-convex set $O$ which is a clopen with respect to $\mathcal T_n$ and $x\in O\subset U$. Then, by P5, $O$ is $<_k$-convex and clopen with respect to $\langle S,<_k,\mathcal T_k\rangle$ for any $k\geq n$. Therefore, $O$ is $<_M$-convex and clopen in $M$.
	\item[\rm Case 2] The assumption is that there exist $a$ and $b$ in $C_x$ such that $a<_n x<_n b$. Since $\mathcal T_L\subset \mathcal T_n$ and $[a,b]_{<_n}$ is compact in $\mathcal T_n$, we conclude that $(a,x)_{<_n}$ and $(x,b)_{<_n}$ are non-empty open sets in $L$. Therefore, there exists $I\in \mathcal T_k$ for some $k$ such that $I\subset (a,x)_{<_n}$. By P4 and P6, there exists a non-empty clopen set $V_a$ in $\mathcal T_k$ such that $V_a$ is $<_k$-convex and $V_a\subset I$. Fix $a'\in V_a$. 
Similarly find a $V_b\subset (x,b)_{<_n}$ that is nonempty,  clopen, and convex in some $\langle S,<_m,\mathcal T_m\rangle$.
Fix $b'\in V_b$. Put $n^*=\max\{k,m\}$. The set $(a',b')_{<_n}\setminus [V_a\cup V_b]$ contains $x$, and,  by P5 and P7, is clopen with respect to $\mathcal T_{n^*}$. Since $\langle S, <_{n^*}, \mathcal T_{n^*}\rangle$ is a GO-space, there exists a clopen convex neighborhood $O$ of $x$ in $\langle S, <_{n^*}, \mathcal T_{n^*}\rangle$ that contains $x$ and is contained in $(a',b')_{<_n}\subset U$. Therefore, $O$ is convex and clopen in $M$. Hence, $O$ is as desired.

	\item[\rm Case 3] The assumption is that neither Case 1 nor Case 2 takes place. Then one of end-points of $C_x$ is $x$.
Then we  proceed as in Case 2.
\end{description}
\par\medskip\noindent
By Claims 3-6, the space $M=\langle S, <_M, \mathcal T_M\rangle$ and $h=g=id_S$ are as desired.
\end{proof}

\par\bigskip
We would like to finish with a few questions that naturally arise from our discussion.

\par\bigskip\noindent
\begin{que}
If $L$ is linearly ordered in Theorem \ref{thm:zerodim},  can $M$ be made linearly ordered too?
\end{que}

\par\bigskip\noindent
\begin{que}
Is there a GO-space outside of class $\mathcal L$ for which the conclusion of the theorem fails.
\end{que}

\par\bigskip\noindent
\begin{que}
Let $f:X\to L\in \mathcal L$ be a continuous surjection and $Ind(X)=0$. Does there exists a factorization for $f$ with  a  zero-dimensional GO-space as a middle space?
\end{que}

\par\bigskip\noindent
{\bf Acknowledgment.} The authors would like to thank David Lutzer \cite{Luz} for useful references and the referee for careful reading of the manuscript and many valuable suggestions.

\par\bigskip

\end{document}